\newcommand{\bea}{\begin{array}}
\newcommand{\eea}{\end{array}}
\newcommand{\beq}{\begin{equation}}
\newcommand{\eeq}{\end{equation}}
\newcommand{\sh}{\mathop{\text{sh}}}
\renewcommand{\sc}{\mathop{\text{sc}}}
\newtheorem{Th}{Theorem}
\newtheorem{Lemma}[Th]{Lemma}
\title{On partitions into squares of distinct integers whose reciprocals sum to 1}
\author{Max A. Alekseyev}
\date{The George Washington University\\
{\tt maxal@gwu.edu}}
\begin{document}
\maketitle\thispagestyle{plain}

Integers $11$ and $24$ share an interesting property: each can be partitioned into distinct positive integers whose reciprocals sum to $1$.
Indeed, $11=2+3+6$ where $\nicefrac12 + \nicefrac13 + \nicefrac16 = 1$, and $24=2+4+6+12$ where again $\nicefrac12 + \nicefrac14 + \nicefrac16 + \nicefrac{1}{12} = 1$.
Sums of reciprocals of distinct positive integers are often referred to as \emph{Egyptian fractions}~\cite{Knott2017}, and from known Egyptian fractions of $1$ we can easily construct other numbers with the same property.
The smallest such numbers are $1$, $11$, $24$, $30$ (sequence \texttt{A052428} in the OEIS~\cite{OEIS}) and they tend to appear rather sparsely among small integers.
So it may come as a surprise that \textit{any} number greater than $77$ has this property. This was proved in 1963 by Graham~\cite{Graham1963}, who further 
conjectured that any sufficiently large integer can be partitioned into \textit{squares} of distinct positive integers whose reciprocals sum to 1~\cite[Section D11]{Guy2013UPINT}.
Examples of such partitions again can be obtained from known examples of Egyptian fractions of $1$:
\begin{center}
\begin{tabular}{lll}
$1 = \frac11$ & ~~~ & $1^2 = 1$ \\
\\
$1 = \frac12 + \frac13 + \frac16$ & ~~~ & $2^2 + 3^2 + 6^2 = 49$ \\
\\
$1 = \frac12 + \frac14 + \frac16 + \frac1{12}$ & ~~~ & $2^2 + 4^2 + 6^2 + 12^2 = 200$ \\
\\
$1 = \frac12 + \frac13 + \frac1{10} + \frac1{15}$ & ~~~ & $2^2 + 3^2 + 10^2 + 15^2 = 338$ \\
\end{tabular}
\end{center}
In fact, $1,\ 49,\ 200,\ 338$ are the smallest such numbers (sequence \texttt{A297895} in the OEIS~\cite{OEIS}) and they seemingly appear even more sparsely.
Nevertheless, in the present study we prove Graham's conjecture and establish the exact bound for existence of such partitions.

We call a positive integer $m$ \emph{representable} if there exists a set of positive integers $X=\{x_1,x_2,\dots,x_k\}$ such that
$$
1 = \frac{1}{x_1} + \dots + \frac{1}{x_n}\quad\mbox{and}\quad m=x_1^2 + \dots + x_n^2.
$$
We further say that $X$ is a \emph{representation} of $m$. For example, $200$ is representable as it has representation $\{2,4,6,12\}$.

Our main result is outlined in the following theorem.

\begin{Th}\label{th:main} 
The largest not representable integer is $8542$.
\end{Th}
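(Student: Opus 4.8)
The plan is to prove the two halves of the statement separately: that $8542$ is not representable, and that every integer $n\ge 8543$ is representable.

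For the first half, note that in any representation $\{x_1,\dots,x_n\}$ of $8542$ the $x_i$ are distinct integers $\ge 2$ with $\sum x_i^2=8542$, so each $x_i\le 92$ and $n$ is small (the distinct squares used must sum to $8542$). I would run a recursive enumeration of Egyptian fractions $1=\sum 1/x_i$ with the $x_i$ strictly increasing, pruning a branch as soon as the partial sum of squares exceeds $8542$, or as soon as the partial sum $\sum 1/x_i$ is already too small to be completed to $1$ using only integers larger than the current one; the enumeration finishes with no decomposition whose sum of squares equals $8542$. The same search also reproduces the small representable numbers $1,49,200,338,\dots$ and the other sparse non-representable values.

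The core of the argument is the second half. The main tool is a \emph{substitution lemma}: if $1=\sum_{x\in X}1/x$ with $\sum_{x\in X}x^2=a$ and $1=\sum_{y\in Y}1/y$ with $\sum_{y\in Y}y^2=b$, and if some $x_0\in X$ satisfies $x_0Y\cap(X\setminus\{x_0\})=\emptyset$, then substituting $\tfrac1{x_0}\sum_{y\in Y}\tfrac1y$ for $\tfrac1{x_0}$ gives the representation $(X\setminus\{x_0\})\cup x_0Y$ of the integer $a+x_0^2(b-1)$. Starting from the seeds $1,49,200,338$ together with a few numbers derived from them by the lemma (for instance, applying it to $\{2,3,10,15\}$ with $x_0=15$ yields a representation of $4+9+100+900+2025+22500+50625=76163$), and choosing $x_0$ so as to control the residue of the increment $x_0^2(b-1)$, one obtains representable numbers in every residue class modulo a fixed modulus, each carrying a representation that still leaves small integers free for further substitutions. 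Since the increments available in this way can be combined so as to cover every residue class and every sufficiently large integer, a Frobenius-type covering argument produces an explicit $N_1$ such that all integers $n>N_1$ are representable; equivalently, one shows that some explicit block $[A,2A]$ is entirely representable and that representability of an interval $[n,n+L]$ forces representability of $n+L+1$ once $L$ is large enough. The delicate point is the disjointness hypothesis $x_0Y\cap(X\setminus\{x_0\})=\emptyset$, which I expect to handle by always keeping several admissible choices of $x_0$ and of the auxiliary number $b$ on hand, verifying the finitely many base configurations directly.

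It remains to bridge the interval $8543\le n\le N_1$: for each such $n$ I would exhibit a representation found by the same recursive search or by a greedy descent, which is a finite computation. Together with the first half this identifies $8542$ as the largest non-representable integer. I expect the main obstacles to be (a) designing the construction so that $N_1$ is small enough for the finite verification to be tractable, and (b) controlling the distinctness constraints through the chain of substitutions.
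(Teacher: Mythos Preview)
Your plan is broadly aligned with the paper's: a computer search to rule out $8542$, explicit representations on a finite initial segment, and an induction via maps that build a representation of a larger integer from one of a smaller integer. Your substitution lemma is the same construction the paper calls a \emph{translation}: in the paper's language a tuple $(k;y_1,\dots,y_l)$ with $\sum 1/y_i = 1-1/k$ sends a representation $X$ of $m'$ to $\{y_1,\dots,y_l\}\cup kX$, which represents $k^2 m' + \sum y_i^2$; your version is the case where $k=x_0$ and $\{y_i\}=X\setminus\{x_0\}$ for some representation $X$ containing $x_0$.

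The genuine gap is that you have not produced the translations that actually make the induction close, and this is precisely the paper's contribution. The paper takes $k=2$ throughout, so $kX$ consists of even numbers, and finds four fixed sets of $y_i$'s whose square-sums hit all residues modulo~$4$: $\{2\}$, $\{5,7,9,45,78,91\}$, $\{3,7,42\}$, $\{3,7,78,91\}$, giving maps $g_i(m')=4m'+c_i$ with $c_i\equiv i\pmod 4$. The disjointness problem you flag as ``the delicate point'' is handled not by keeping multiple choices of $x_0,b$ on hand but by a single invariant: the only even $y_i$'s appearing are $42=2\cdot 21$ and $78=2\cdot 39$, so one restricts once and for all to $\{21,39\}$-avoiding representations and checks that every $f_i$ preserves this property. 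That turns the induction into a clean descent $m\mapsto m'=(m-c_i)/4$ and pins the finite verification to the interval $[8543,54533]$. Your Frobenius-style phrasing, in which the available increment $x_0^2(b-1)$ depends on whichever $x_0$ happens to lie in the current representation, does not by itself yield a fixed modulus or a self-propagating invariant; until you exhibit concrete translations and an avoidance condition of this kind, the induction does not close.
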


We provide a proof of Theorem~\ref{th:main} generalizing the original approach of Graham~\cite{Graham1963} based on constructing representations of larger numbers from those of smaller ones.
More generally, we refer to such construction as \emph{translation} of representations, and introduce a class of translations that acts on restricted representations.
This provides us with yet another proof of Theorem~\ref{th:main}.

Since our approach requires computation of representations of certain small numbers, and we start with discussion of an algorithm that generates representations of a given number.
The same algorithm is also used to prove that $8542$ is not representable.

\section{Computing Representations}

Our goal is to design an efficient exhaustive-search algorithm for a representation of a given integer $m$. We start with proving bounds for the search, using the power mean inequality.

Recall that the $q$-th power mean of positive numbers $x_1,\dots,x_k$ is defined as 
$$A_q(x_1,\dots,x_k) = \left(\frac{x_1^q+\dots+x_k^q}{k}\right)^{\frac{1}{q}}.$$
In particular, $A_q(x_1,\dots,x_k)$ represents the harmonic, geometric, and arithmetic mean when $q=-1,\,0,\,1$, respectively.\footnote{Formally speaking, the geometric mean equals the limit of $A_q(x_1,\dots,x_k)$ as $q\to 0$.}
The power mean inequlity, generalizing the AM-GM inequlity, states that
$A_q(x_1,\dots,x_k) \leq A_{q'}(x_1,\dots,x_k)$ whenever $q\leq q'$.

The following lemma will be crucial for our algorithm.

\begin{Lemma}\label{lem:ineq}
For a positive integer $d$ and a finite set of positive integers $X$, let
\begin{equation}\label{eq:sm}
s = \sum_{x\in X} \frac{1}{x}\qquad\text{and}\qquad n = \sum_{x\in X} x^d.
\end{equation}
Then
\begin{equation}\label{eq:boundk}
|X| \leq s\sqrt[d+1]{\frac{n}{s}}
\end{equation}
and
\begin{equation}\label{eq:rangex1}
\left\lceil\frac{1}{s}\right\rceil \leq \min X \leq \left\lfloor \min\left\{ \sqrt[d+1]{\frac{n}{s}},\ \sqrt[d]{n} \right\} \right\rfloor.
\end{equation}
\end{Lemma}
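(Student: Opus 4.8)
The plan is to write $k=|X|$ and $x_1=\min X$ (the set $X$ is implicitly nonempty, so $s>0$ and $x_1$ is well defined), and to obtain both displayed inequalities from two elementary ingredients: the power mean inequality applied to the exponents $-1$ and $d$, and the crude observation that the smallest element of $X$ simultaneously controls the largest reciprocal term contributing to $s$ and one of the summands of $n$.

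First I would prove~\eqref{eq:boundk}. The $(-1)$-st power mean of the elements of $X$ is $A_{-1}(X)=k/s$, and the $d$-th power mean is $A_d(X)=\sqrt[d]{n/k}$. Since $-1\le d$, the power mean inequality gives $k/s\le\sqrt[d]{n/k}$. Raising both sides to the $d$-th power and multiplying through by $k$ yields $k^{d+1}\le n s^{d}$, which rearranges to $k\le (n s^{d})^{1/(d+1)}=s\sqrt[d+1]{n/s}$, as claimed.

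Next I would establish~\eqref{eq:rangex1}. For the lower bound, $1/x_1$ is one of the (positive) summands of $s$, hence $1/x_1\le s$, i.e.\ $x_1\ge 1/s$; since $x_1\in\mathbb{Z}$ this forces $x_1\ge\lceil 1/s\rceil$. For the upper bound, on the one hand $x_1^{d}$ is one of the positive summands of $n$, so $x_1^{d}\le n$ and $x_1\le\sqrt[d]{n}$; on the other hand every term of $s$ satisfies $1/x\le 1/x_1$, so $s\le k/x_1$, that is $x_1\le k/s$, and combining this with~\eqref{eq:boundk} gives $x_1\le k/s\le\sqrt[d+1]{n/s}$. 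Thus $x_1\le\min\{\sqrt[d+1]{n/s},\ \sqrt[d]{n}\}$, and integrality of $x_1$ upgrades this to the floor.

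There is no genuine obstacle here; the only things requiring care are selecting the correct instance of the power mean inequality (the exponents $-1$ and $d$, which are valid since $d\ge 1>-1$) and tracking the algebra that rewrites $(n s^{d})^{1/(d+1)}$ as $s\sqrt[d+1]{n/s}$. It is also worth stating explicitly that $X$ is assumed nonempty, so that $s>0$ and the minimum in the statement is defined.
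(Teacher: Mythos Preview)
Your proof is correct and follows essentially the same route as the paper: apply the power mean inequality between exponents $-1$ and $d$ to obtain \eqref{eq:boundk}, then bound $\min X$ below via $1/x_1\le s$ and above via $x_1^d\le n$ together with $s\le k/x_1$ combined with \eqref{eq:boundk}. The only differences are cosmetic: you spell out the algebra rewriting $(ns^{d})^{1/(d+1)}$ as $s\sqrt[d+1]{n/s}$ and make the integrality step for the ceiling and floor explicit, both of which the paper leaves implicit.
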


\begin{proof} Suppose that $X=\{x_1,\dots,x_k\}$, where $x_1 < x_2 < \dots < x_k$, and so $|X|=k$ and $\min X=x_1$.

From \eqref{eq:sm} and the power mean inequality, it follows that
$$\frac{k}{s} = A_{-1}(x_1,\dots,x_k) \leq A_d(x_1,\dots,x_k) = \sqrt[d]{\frac{n}{k}},$$
which further implies \eqref{eq:boundk}.

Since $x_1 = \min X$, we have
$$\frac{1}{x_1}\leq s = \frac{1}{x_1} + \dots + \frac{1}{x_k} \leq \frac{k}{x_1},$$
implying that
$$\frac{1}{s}\leq x_1 \leq \frac{k}{s}.$$
Similarly, from $x_1^d \leq x_1^d + \dots + x_k^d = n$, we obtain $x_1 \leq \sqrt[d]{n}$.
Finally, using \eqref{eq:boundk}, we get
$$x_1 \leq \frac{k}{s} \leq \sqrt[d+1]{\frac{n}{s}},$$
which completes the proof of \eqref{eq:rangex1}.
\end{proof}

Lemma~\ref{lem:ineq} for $d=2$ enables us to search for a representation $X=\{x_1 < x_2 < \dots < x_k\}$ of a given integer $m$ using backtracking as follows. 
Clearly, $X$ should satisfy the equalities \eqref{eq:sm} for $s=1$ and $n=m$, and so we let $s_1=1$ and $n_1=m$. Then the value of $x_1=\min X$ lays in the range given by \eqref{eq:rangex1} for $s=s_1$ and $n=n_1$.
For each candidate value of $x_1$ in this range, we compute $s_2=s_1-\frac{1}{x_1}$ and $n_2=n_1-x_1^2$ representing the sum of reciprocals and squares, respectively, of the elements of $X\setminus\{x_1\}$.
Then \eqref{eq:rangex1} for $s=s_2$ and $n=n_2$ defines a range for $x_2=\min (X\setminus\{x_1\})$ (additionally we require $x_2\geq x_1+1$), and so on. 
The procedure stops when $s_{k+1}=0$ and $n_{k+1}=0$ for some $k$, implying that $1=s_1=\sum_{i=1}^k \frac{1}{x_i}$ and $m = n_1 = \sum_{i=1}^k x_i^2$, i.e., $\{x_1,\dots,x_k\}$ is a representation of $m$.
On the other hand, if all candidate values have been explored without finding a representation, then no such representation exists (i.e., $m$ is not representable).

We remark that the bounds in \eqref{eq:rangex1} do not depend on $|X|$, and thus we do not need to know the size of $X$ in advance. 
Furthermore, the inequality \eqref{eq:boundk} guarantees that the algorithm always terminates, and either produces a representation of $m$ or establishes that none exist.

\begin{algorithm}[!t]
\caption{For given rational number $s$ and integers $t,\,n$, function $\textsc{ConstructX}(t,\ s,\ n)$ constructs a set $X$ 
of positive integers such that $\min X\geq t$, $\sum_{x\in X} x^{-1}=s$, and $\sum_{x\in X} x^2=n$; or returns the empty set $\emptyset$ if no such $X$ exists.}
\label{alg:findX}
\begin{algorithmic}
\Function{ConstructX}{$t,\ s,\ n$}
   \If {$s\leq 0$ or $n\leq 0$}
   \State \Return $\emptyset$
   \EndIf
\State $L :=  \left\lceil \max\left\{\ \frac{1}{s},\ t\ \right\}\right\rceil$
\State $U := \left\lfloor\min\left\{\ \sqrt[3]{\frac{n}{s}},\ \sqrt{n}\ \right\}\right\rfloor$
\For {$x := L,\ L+1,\ \dots,\ U$}
   \State {$s_{\text{new}} := s - \frac{1}{x}$}
   \State {$n_{\text{new}} := n - x^2$}
   \If {$s_{\text{new}}=0$ and $n_{\text{new}}=0$}
   \State \Return $\{x\}$
   \EndIf
   \State {$X :=$ \Call{ConstructX}{$x+1,\ s_{\text{new}},\ n_{\text{new}}$}}
   \If {$X \ne \emptyset$}
   \State \Return {$X\cup \{x\}$} 
   \EndIf
\EndFor
\State \Return $\emptyset$
\EndFunction
\end{algorithmic}
\end{algorithm}

Algorithm~\ref{alg:findX} presents a pseudocode of the described algorithm as the recursive function $\textsc{ConstructX}(t,s,n)$.
To construct a representation of $m$, one needs to call $\textsc{ConstructX}(1,1,m)$. For $m=8542$, this function returns the empty set and thus implies the following statement.

\begin{Lemma}\label{lem:un8542} 
The number $8542$ is not representable.
\end{Lemma}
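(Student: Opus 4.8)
The plan is to obtain Lemma~\ref{lem:un8542} as a consequence of the correctness of Algorithm~\ref{alg:findX}. Unpacking the definitions, a representation of a positive integer $m$ is precisely a finite set $X$ of positive integers with $\min X\ge 1$, $\sum_{x\in X}x^{-1}=1$, and $\sum_{x\in X}x^2=m$ — that is, exactly the kind of object that $\textsc{ConstructX}(1,1,m)$ is built to find or to rule out. So it suffices to prove: (i) whenever $\textsc{ConstructX}(1,1,m)=\emptyset$, the number $m$ admits no representation; and (ii) $\textsc{ConstructX}(1,1,8542)=\emptyset$.

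For (i) I would establish, by induction on the recursion, the invariant that for every positive rational $s$ and integers $t,n$ the call $\textsc{ConstructX}(t,s,n)$ returns a set $X$ with $\min X\ge t$, $\sum_{x\in X}x^{-1}=s$, and $\sum_{x\in X}x^2=n$ if such a set exists, and returns $\emptyset$ otherwise. The base cases are the guard ``$s\le 0$ or $n\le 0$'' (no valid $X$ can exist then) and the event $s_{\text{new}}=n_{\text{new}}=0$ (which certifies that $\{x\}$ is valid). For the inductive step, suppose a valid $X$ exists and put $x_1=\min X$; then $x_1\ge t$, and Lemma~\ref{lem:ineq} applied to $X$ with $d=2$ gives $\lceil 1/s\rceil\le x_1\le\lfloor\min\{\sqrt[3]{n/s},\sqrt n\}\rfloor$, hence $x_1\in\{L,\dots,U\}$ for the values $L,U$ computed by the algorithm. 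Therefore the \textbf{for} loop reaches $x=x_1$, at which point $X\setminus\{x_1\}$ is a valid input for $\textsc{ConstructX}(x_1+1,\,s-x_1^{-1},\,n-x_1^2)$ and is found by the induction hypothesis; conversely, any set the algorithm returns is valid by construction. Termination holds because along any chain of recursive calls the first argument increases by at least $1$ while the upper bound satisfies $U\le\sqrt n\le\sqrt m$, so after finitely many steps the relevant loop becomes empty and $\emptyset$ is returned; this is also what is meant by the remark, following Lemma~\ref{lem:ineq}, that \eqref{eq:boundk} guarantees termination.

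Granting (i), the lemma is reduced to the finite, terminating computation in (ii). Concretely, $\textsc{ConstructX}(1,1,8542)$ first finds $\min X\in\{1,\dots,20\}$ from \eqref{eq:rangex1} (the candidate $x=1$ fails at once, since it makes $s_{\text{new}}=0$ but $n_{\text{new}}=8541\ne 0$), then branches over $x_1\in\{2,\dots,20\}$ and recurses; I would carry out this search and record that every branch is exhausted without ever producing $s_{\text{new}}=n_{\text{new}}=0$, so the top-level call returns $\emptyset$. I do not anticipate a real mathematical obstacle here: the only subtle point is (i) — making sure an empty return is a genuine \emph{proof} of non-representability, not merely evidence — and the only practical concern is transparency, so one should report the (small) number of recursive calls performed, making the computation reproducible. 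A fully computer-free proof would founder on the size of the search tree for $m=8542$, which is too large to prune entirely by hand; hence the computational route is the one to take.
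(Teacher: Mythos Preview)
Your proposal is correct and follows essentially the same approach as the paper: both reduce the lemma to the computational fact that $\textsc{ConstructX}(1,1,8542)=\emptyset$, with the exhaustiveness of the search justified by the bounds of Lemma~\ref{lem:ineq}. You are simply more explicit than the paper about the correctness invariant and termination argument for Algorithm~\ref{alg:findX}, which the paper leaves at the level of the informal discussion preceding the pseudocode.
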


It is easy to modify Algorithm~\ref{alg:findX} to search for representations with additional restrictions on the elements (e.g., with certain numbers forbidden). We will see a need for such representations below.

\section{Proof of Theorem~\ref{th:main}}

For a set $S$, we say that a representation $X$ is \emph{$S$-avoiding} if $X\cap S=\emptyset$, i.e., $X$ contains no elements from $S$.

Graham~\cite{Graham1963} introduced two functions: 
\begin{equation}\label{eq:f03}
f_0(X) = \{2\} \cup 2X\qquad\text{and}\qquad f_3(X) = \{ 3,7,78,91\} \cup 2X
\end{equation}
defined on $\{39\}$-avoiding representations, where the set $2X$ is obtained from $X$ by multiplying each element by 2 (i.e., $2X = \{ 2x\ :\ x\in X\}$).
Indeed, since a representation of $m>1$ cannot contain $1$, the sets $\{2\}$ and $2X$ are disjoint, implying that
$$\sum_{y\in f_0(X)} \frac{1}{y} = \frac{1}{2} + \sum_{x\in X} \frac{1}{2x} = \frac{1}{2} + \frac{1}{2} \sum_{x\in X} \frac{1}{x} = 1.$$
Similarly, the sets $\{ 3,7,78,91\}$ and $2X$ are disjoint, since $2X$ consists of even numbers and $2\cdot 39=78\notin 2X$ (as $X$ is $\{39\}$-avoiding), implying that
$$\sum_{y\in f_3(X)} \frac{1}{y} = \frac13 + \frac17 + \frac{1}{78} + \frac{1}{91} + \sum_{x\in X} \frac{1}{2x} = \frac{1}{2} + \frac{1}{2} \sum_{x\in X} \frac{1}{x} = 1.$$
Furthermore, if $X$ is a representation of an integer $m$, then $\sum_{y\in 2X} y^2 = \sum_{x\in X} (2x)^2 = 4m$, implying that $f_0(X)$ and $f_3(X)$ 
are representations of integers $g_0(m)=2^2 + 4m = 4m+4$ and $g_3(m)=3^2 + 7^2 + 78^2 + 91^2 + 4m = 4m + 14423$, respectively.
Trivially, for any integer $m$, we have $g_0(m)\equiv 0\pmod{4}$ and $g_3(m)\equiv 3\pmod{4}$, which explain the choice of indices in the function names.
Lastly, one can easily check that neither of $f_0(X)$ and $f_3(X)$ contains $39$, and thus they map $\{39\}$-avoiding representations to $\{39\}$-avoiding representations.

While functions $f_0(X)$ and $f_3(X)$ were sufficient for the problem addressed by Graham, we will need two more functions $f_1(X)$ and $f_2(X)$ such that the corresponding functions $g_i(m)$ ($i=0,1,2,3$) form a complete residue system modulo $4$.
It turns out that such functions cannot be defined on $\{39\}$-avoiding representations, which leads us to further restricting the domain. 
Namely, we find it convenient to deal with the $\{21,39\}$-avoiding representations, on which we define
\begin{equation}\label{eq:f12}
f_1(X) = \{ 5,7,9,45,78,91\} \cup 2X\qquad\text{and}\qquad f_2(X) = \{3,7,42\} \cup 2X.
\end{equation}
One can easily see that the functions $f_1$ and $f_2$ map a $\{21,39\}$-avoiding representation $X$ of an integer $m$ to a $\{21,39\}$-representation of integers $g_1(m)=4m+16545$ and $g_2(m)=4m+1822$, respectively.
The functions $f_0$ and $f_3$ can also be viewed as mappings on $\{21,39\}$-avoiding representations.
As planned, we have $g_i(m)\equiv i\pmod{4}$ for each $i=0,1,2,3$, which will play a key role in our proof of Theorem~\ref{th:main} below.

\begin{Lemma}\label{lem:no2139} Let $m$ be an integer such that $8543\leq m\leq 54533$. Then
\begin{enumerate}[(i)]
\item $m$ is representable;
\item $m$ has a $\{21,39\}$-avoiding representation unless 
$m\in E$, where
$$E = \{\ 8552,\ 8697,\ 8774,\ 8823,\ 8897,\ 8942,\ 9258,\ 9381,\ 9439,\ 9497\ \}.$$
\end{enumerate}
\end{Lemma}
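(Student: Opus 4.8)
The plan is to prove both parts by direct computation, invoking Algorithm~\ref{alg:findX} (suitably modified) as a black box. For part~(i), I would simply run $\textsc{ConstructX}(1,1,m)$ for every $m$ in the finite range $8543\le m\le 54533$; Lemma~\ref{lem:ineq} guarantees termination, and the claim is that each such call returns a nonempty set. Since this is a finite check over roughly $46{,}000$ values, the only thing to verify is that the computation is feasible, which the bounds in \eqref{eq:rangex1} make plausible since they tightly constrain the branching. For part~(ii), I would run the modified version of $\textsc{ConstructX}$ that forbids the elements $21$ and $39$ (as remarked after Lemma~\ref{lem:un8542}, this is an easy modification: intersect each candidate range with the complement of $\{21,39\}$, or equivalently skip $x\in\{21,39\}$ in the \textbf{for} loop). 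The claim is that this returns a nonempty set for every $m$ in the range except exactly the ten values listed in $E$, and for those ten it returns $\emptyset$.

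The write-up would therefore consist of: (a) a sentence stating that part~(i) follows from running $\textsc{ConstructX}(1,1,m)$ for all $m$ in the stated range; (b) a sentence describing the $\{21,39\}$-avoiding modification of the algorithm and stating that running it over the same range produces representations for all $m\notin E$ and fails precisely for $m\in E$; and (c) optionally, exhibiting one explicit $\{21,39\}$-avoiding representation for a few sample values, and explicitly noting that $E$-members are still representable (consistent with part~(i)) just not without using $21$ or $39$ — for instance one can record that each $m\in E$ does admit a representation containing $21$ or $39$. It would be worth double-checking that none of the ten exceptional values coincides with the endpoints of ranges used later in the proof of Theorem~\ref{th:main}, so that the four translation maps $g_0,g_1,g_2,g_3$ can still cover all residues starting from these inputs.

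The main obstacle is not mathematical but computational and expository: one must trust (and, in a referee-facing sense, reproduce) a moderately large exhaustive search, and one must be careful that the modified algorithm is still correct — in particular that forbidding $21$ and $39$ does not break the termination argument (it does not, since the upper bound $U$ in \eqref{eq:rangex1} is unchanged and we only remove candidates) and that the range bounds are applied to $s=s_{\text{new}}$ and $n=n_{\text{new}}$ exactly as in the unrestricted case. A secondary subtlety is confirming that $E$ is complete: one must be sure the search for the ten exceptional $m$ truly exhausts all $\{21,39\}$-avoiding candidates, which again follows from Lemma~\ref{lem:ineq} applied with $d=2$. Since the statement asserts an exact list, the verification must be an exhaustive (not heuristic) search, but Lemma~\ref{lem:ineq} guarantees exactly that.

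For concreteness, I would include in the proof a short table of witness representations, e.g. noting that $8543=g_3(m')$-type constructions are not yet available at this stage (that is the point of the lemma), so the witnesses must come directly from the search; a handful such as $\{3,5,9,10,12,15,18,\dots\}$-style sets for representative values of $m$ suffice to make the computation auditable, with the full enumeration relegated to accompanying code.
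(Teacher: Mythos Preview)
Your proposal is correct and takes essentially the same approach as the paper: the paper's proof consists of a single sentence stating that the result is established computationally, with the witness representations (each $\{21,39\}$-avoiding for $m\geq 8543$, $m\notin E$) provided in a supplementary file. Your write-up is more detailed about the algorithmic modification and the termination/correctness justification, and you additionally propose to verify that each $m\in E$ genuinely lacks a $\{21,39\}$-avoiding representation --- a point the paper's proof does not explicitly certify (it only exhibits avoiding representations for $m\notin E$), though it is not needed for the downstream proof of Theorem~\ref{th:main}.
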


\begin{proof} The proof is established computationally.
We provide representations of 
all representable integers $m\leq 54533$ in a supplementary file (see Section~\ref{sec:supp}), where the listed representation of each $m\geq 8543$, $m\notin E$ is $\{21,39\}$-avoiding.
\end{proof}

We are now ready to prove Theorem~\ref{th:main}.

\paragraph{Proof of Theorem~\ref{th:main}.}
Thanks to Lemma~\ref{lem:un8542}, it remains to prove that every number greater than $8542$ is representable.
First, Lemma~\ref{lem:no2139} implies that every number $m$ in the range $8543\leq m\leq 9497$ is representable. For larger $m$, we will use induction on $m$ to prove that 
every integer $m\geq 9498$ has a $\{21,39\}$-avoiding representation.

Again, by Lemma~\ref{lem:no2139}, we have that every number $m$ in the range $9498\leq m\leq 54533$ has a $\{21,39\}$-avoiding representation.

Consider $m>54533$ and assume that all integers in the interval $[9498,m-1]$ have $\{21,39\}$-avoiding representations.
Let $i=m\bmod 4$ and so $i\in\{0,1,2,3\}$. Then there exists an integer $m'$ such that $m=g_i(m')$ and thus $m' \geq \frac{m-16545}{4} > \frac{54533-16545}{4} = 9497$. By the induction assumption, $m'$ has a $\{21,39\}$-avoiding representation $X$. 
Then $f_i(X)$ is a $\{21,39\}$-avoiding representation of $m$, which concludes the proof.
\qed

\section{$t$-Translations}

The functions defined in \eqref{eq:f03}-\eqref{eq:f12} inspire us to consider a more broad class of functions that map representations of small integers to those of larger ones.
For an integer $t$, we call an integer $m$ \emph{$t$-representable} if it has a representation $X$ with $\min X\geq t$, called $t$-representation.
Clearly, an integer $m>1$ is representable if and only if it is $2$-representable.
A $t$-representation (if it exists) of a given integer $m$ can be constructed with Algorithm~\ref{alg:findX} by calling $\textsc{ConstructX}(t,1,m)$.

Let $t$ be a positive integer. A tuple of positive integers $r=(k;y_1,\dots,y_l)$ is called \emph{$t$-translation}
if
$$
1 - \frac{1}{k} = \frac{1}{y_1} + \dots + \frac{1}{y_l},
$$
$t\leq y_1 < y_2 < \dots < y_l$, and for every $i\in\{1,2,\dots,l\}$, we have either $y_i<tk$ or $k\nmid y_i$.

With every $t$-translation $r=(k;y_1,\dots,y_l)$ we associate two parameters:
\emph{scale} $\sc(r)=k^2$ and \emph{shift} $\sh(r)=y_1^2 + \dots + y_l^2$.

\begin{Lemma}\label{Lrep} Let $m$ be a $t$-representable integer, and $r$ be a $t$-translation.
Then the number $\sc(r)\cdot m + \sh(r)$ is $t$-representable.
\end{Lemma}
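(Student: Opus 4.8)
The plan is to mimic the verifications already carried out for $f_0,\dots,f_3$, but in the general setting. Let $m$ be $t$-representable with a $t$-representation $X=\{x_1,\dots,x_n\}$, so $\min X\geq t$, $\sum_{x\in X}\frac1x=1$, and $\sum_{x\in X}x^2=m$. Given a $t$-translation $r=(k;y_1,\dots,y_l)$, the natural candidate for a $t$-representation of $\sc(r)\cdot m+\sh(r)=k^2m+\sum_i y_i^2$ is
$$
f_r(X) = \{y_1,\dots,y_l\}\cup kX,\qquad\text{where } kX=\{kx\ :\ x\in X\}.
$$
First I would check the reciprocal sum: $\sum_{x\in X}\frac1{kx}=\frac1k\sum_{x\in X}\frac1x=\frac1k$, so $\sum_{z\in f_r(X)}\frac1z = \bigl(1-\frac1k\bigr)+\frac1k = 1$, using the defining equation of a $t$-translation. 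Next the sum of squares: $\sum_{x\in X}(kx)^2 = k^2 m$, so $\sum_{z\in f_r(X)}z^2 = \sh(r)+\sc(r)\cdot m$, as required. It remains to verify that $f_r(X)$ is genuinely a set (i.e.\ $\{y_1,\dots,y_l\}$ and $kX$ are disjoint, and $kX$ itself has $n$ distinct elements) and that $\min f_r(X)\geq t$.

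The key step — and the only place the somewhat technical condition in the definition of $t$-translation is used — is the disjointness $\{y_1,\dots,y_l\}\cap kX=\emptyset$. Suppose $y_i\in kX$, say $y_i=kx$ for some $x\in X$. Then $k\mid y_i$, so by the $t$-translation condition we must have $y_i\geq tk$, hence $x=y_i/k\geq t$... this is not yet a contradiction, so I would instead argue from the size of $x$: since $x\in X$ and $X$ is a $t$-representation of $m$, every element of $X$ is at most... actually the cleanest route is: $y_i = kx$ with $x\geq\min X\geq t$ gives $y_i\geq tk$, which is consistent; the contradiction must come from comparing with the reciprocal identity. Here the real argument is that $\frac1{y_i}$ appears on the left side $1-\frac1k$ and $\frac1{kx}=\frac1{y_i}$ would be double-counted — but that is not automatically contradictory either. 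The honest resolution is that the condition ``$y_i<tk$ or $k\nmid y_i$'' is precisely engineered so that any $y_i$ divisible by $k$ is too small to be of the form $kx$ with $x\in X$: indeed if $k\mid y_i$ then $y_i<tk$, so $y_i/k<t\leq\min X$, so $y_i/k\notin X$, so $y_i\notin kX$. That is the clean disjointness proof, and I expect getting this implication stated correctly to be the main (though modest) obstacle.

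Finally, $kX$ has exactly $n=|X|$ elements since $x\mapsto kx$ is injective, and $\{y_1,\dots,y_l\}$ has $l$ elements since $y_1<\dots<y_l$; combined with the disjointness just shown, $|f_r(X)|=n+l$. For the lower bound on the minimum: every element of $kX$ is $\geq k\min X\geq kt\geq t$ (as $k\geq 1$; in fact $k\geq 2$ whenever the $t$-translation is nontrivial, but $k\geq 1$ suffices), and every $y_i\geq y_1\geq t$ by the definition of $t$-translation, so $\min f_r(X)\geq t$. Therefore $f_r(X)$ is a $t$-representation of $\sc(r)\cdot m+\sh(r)$, which completes the argument.
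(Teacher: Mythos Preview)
Your argument is correct and follows essentially the same route as the paper: form $Y=\{y_1,\dots,y_l\}\cup kX$, verify the reciprocal and square sums, use the defining condition on a $t$-translation to get disjointness (if $k\mid y_i$ then $y_i<tk$, so $y_i/k<t\le\min X$), and check $\min Y\ge t$. The exploratory detours in the middle of your write-up should be excised in a final version, but the mathematics matches the paper's proof.
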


\begin{proof} Suppose that $\{x_1,\dots,x_n\}$ is a $t$-representation of $m$.
If $r=(k;y_1,\dots,y_l)$ is a $t$-translation, then
$$
\sc(r)\cdot m + \sh(r) = y_1^2 + \dots + y_l^2 + (k x_1)^2 + \dots + (k x_n)^2
$$
and
$$
\frac{1}{y_1} + \dots + \frac{1}{y_l} + \frac{1}{k x_1} + \dots + \frac{1}{k x_n} 
= \left(1-\frac{1}{k}\right) + \frac{1}{k} = 1.
$$
Notice that $\{ y_1, \dots, y_l\} \cap \{k x_1, \dots, k x_n\}=\emptyset$,
since for any $i\in\{1,2,\dots,l\}$ and $j\in\{1,2,\dots,n\}$, we have either $y_i < tk \leq k x_j$ or $k\nmid y_i$, i.e., $y_i\ne kx_j$.
Hence, the set $Y = \{ y_1, \dots, y_l, k x_1, \dots, k x_n \}$ forms a representation of $\sc(r)\cdot m + \sh(r)$.
Furthermore, it is easy too that $\min Y\geq t$, i.e., $Y$ is a $t$-representation.
\end{proof}

The function $f_0(X)$ defined in \eqref{eq:f03} corresponds to a $2$-translation $(2;2)$, however $(2;3,7,78,91)$ corresponding to the function $f_3(X)$ is not a $2$-translation because of the presence of $78=2\cdot 39$.
As we will see below, it is preferable to have the scale small, ideally equal $2^2=4$, which is possible for the only $2$-translation $(2;2)$.

At the same time, for $t=6$, we can construct a set of $6$-translations such as
\begin{equation}\label{eq:transl6}
\begin{split}
\{~~ & (2;9, 10, 11, 15, 21, 33, 45, 55, 77),\quad (2;6, 7, 9, 21, 45, 105),\\
& (2;7, 9, 10, 15, 21, 45, 105);\quad (2;6, 9, 11, 21, 33, 45, 55, 77) ~~\},
\end{split}
\end{equation}
where the translations have scale $2^2=4$ and shifts $\{ 13036, 13657, 13946, 12747 \}$.

A set of translations $S$ is called \emph{complete} if the set
$$\left\{\ \sh(r) \pmod{\sc(r)}\ :\ r\in S\ \right\}$$
forms a complete residue system, i.e., for any integer $m$ there exists $r\in S$ such that $m\equiv \sh(r) \pmod{\sc(r)}$.
It can be easily verified that 
\eqref{eq:transl6} forms a complete set of translations.

The following theorem generalizes Theorem~\ref{th:main}.

\begin{Th}\label{th:Tcomp} Let $S$ be a complete set of $t$-translations with
maximum scale $q$ and maximum shift $s$.
If numbers $n+1,n+2,\dots,qn+s$ are $t$-representable, then so is any number greater than $n$.
\end{Th}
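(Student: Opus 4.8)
The plan is to mimic the inductive structure of the proof of Theorem~\ref{th:main}, with the complete set $S$ of $t$-translations playing the role that the four functions $f_0,\dots,f_3$ played there. We want to show every integer $m>n$ is $t$-representable, by strong induction on $m$. The base of the induction is precisely the hypothesis that $n+1,n+2,\dots,qn+s$ are $t$-representable, so it suffices to handle $m>qn+s$ assuming all integers in $[n+1,m-1]$ are $t$-representable.

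Given such an $m$, the key step is to pick a translation $r=(k;y_1,\dots,y_l)\in S$ with $m\equiv\sh(r)\pmod{\sc(r)}$; completeness of $S$ guarantees one exists. Then $m-\sh(r)$ is divisible by $\sc(r)=k^2$, so we may set $m' = \bigl(m-\sh(r)\bigr)/\sc(r)$, and by Lemma~\ref{Lrep} it is enough to show that $m'$ is $t$-representable, which by the induction hypothesis follows once we check $n+1\leq m'\leq m-1$. The upper bound is easy: $m' = (m-\sh(r))/\sc(r)\leq m - \sh(r) < m$ (using $\sc(r)\geq 1$ and, if we want strictness even when $\sh(r)$ could be $0$, the fact that $\sc(r)\geq 4$ since $k\geq 2$; in any case $m'<m$). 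For the lower bound we use $m>qn+s\geq \sc(r)\cdot n + \sh(r)$, so $m' = (m-\sh(r))/\sc(r) > n$, i.e. $m'\geq n+1$. This closes the induction and proves every $m>n$ is $t$-representable; taking $t=2$ and $S$ equal to the complete set \eqref{eq:transl6} (with $n=8542$, $q=4$, $s=13946$) recovers Theorem~\ref{th:main} from Lemma~\ref{lem:no2139}, which is why this statement generalizes it.

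The only genuinely delicate point is making sure that whenever $r$ is chosen by the residue condition, the quantity $m'=(m-\sh(r))/\sc(r)$ really does fall in the interval where the induction hypothesis applies — and this is exactly what the hypothesis ``$n+1,\dots,qn+s$ are $t$-representable'' together with $m>qn+s$ is engineered to guarantee, via the bounds $\sc(r)\leq q$ and $\sh(r)\leq s$. So the main obstacle is essentially bookkeeping: verifying the two inequalities $n < m' < m$ uniformly over all $r\in S$ using only the maximum scale $q$ and maximum shift $s$, rather than any finer information about the individual translations. There is nothing deeper required; the substance has already been packaged into Lemma~\ref{Lrep} and the notion of a complete set of translations.
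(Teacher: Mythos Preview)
Your proof is correct and follows exactly the same inductive scheme as the paper: pick $r\in S$ by the residue condition, set $m'=(m-\sh(r))/\sc(r)$, bound $n<m'<m$ using $\sc(r)\le q$ and $\sh(r)\le s$, and invoke Lemma~\ref{Lrep}. One small correction to your closing remark: the set \eqref{eq:transl6} consists of $6$-translations, not $2$-translations (for instance $10$ is even and $\ge 2\cdot 2$, violating the $t=2$ condition), so to recover Theorem~\ref{th:main} the paper applies Theorem~\ref{th:Tcomp} with $t=6$ and $n=15707$ (Theorem~\ref{th:rep6}) and then combines this with Lemma~\ref{lem:no2139}(i) for the range $[8543,15707]$.
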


\begin{proof} 
Suppose that all numbers $n+1,n+2,\dots,qn+s$ are $t$-representable.
We will prove by induction that so is any number $m>qn+s$. 

Assume that all numbers from $n+1$ to $m-1$ are $t$-representable.
Since $S$ is complete, there exists a $t$-translation $r\in S$
such that $\sh(r)\equiv m\pmod{\sc(r)}$. Note that $\sh(r)\leq s$ and $\sc(r)\leq q$.

For a number $m'=\frac{m-\sh(r)}{\sc(r)}$, we have $m'<m$ and $m'>\frac{qn+s - s}{q} = n$, implying by induction that $m'$ is $t$-representable. 
Then by Lemma~\ref{Lrep} the number $m=\sc(r)\cdot m'+\sh(r)$ is $t$-representable.
\end{proof}

We can observe that small 6-representable numbers tend to appear very sparsely, with the smallest ones being $2579$, $3633$, $3735$, $3868$ (sequence \texttt{A303400} in the OEIS~\cite{OEIS}).
Nevertheless, using the complete set of $6$-translations \eqref{eq:transl6}, we can prove the following theorem.

\begin{Th}\label{th:rep6} The largest integer that is not $6$-representable is $15707$.
\end{Th}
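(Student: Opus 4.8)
The plan is to apply Theorem~\ref{th:Tcomp} with $t=6$ and the complete set of $6$-translations $S$ given in \eqref{eq:transl6}. From that set we read off the maximum scale $q=4$ and the maximum shift $s=\max\{13036,13657,13946,12747\}=13946$. To conclude that every integer greater than $15707$ is $6$-representable, it suffices by Theorem~\ref{th:Tcomp} to exhibit a value $n=15707$ for which all numbers $n+1,n+2,\dots,qn+s = 15708,\dots,4\cdot 15707 + 13946 = 76774$ are $6$-representable. So the first step is a finite computation: run $\textsc{ConstructX}(6,1,m)$ (the variant of Algorithm~\ref{alg:findX} with $t=6$) for every $m$ in the range $[15708,76774]$ and record a $6$-representation of each. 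These representations will be placed in the supplementary file (cf.\ Section~\ref{sec:supp}), exactly as was done for the $\{21,39\}$-avoiding representations underlying Lemma~\ref{lem:no2139}. Granting that these all succeed, Theorem~\ref{th:Tcomp} immediately yields that every integer $m>15707$ is $6$-representable.

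The second half of the statement is the sharpness claim: $15707$ itself is \emph{not} $6$-representable. This is again a finite check: calling $\textsc{ConstructX}(6,1,15707)$ returns $\emptyset$. By the correctness of Algorithm~\ref{alg:findX} (its termination and exhaustiveness were argued right after Lemma~\ref{lem:ineq}, with the bounds of Lemma~\ref{lem:ineq} for $d=2$ guaranteeing that the backtracking search explores all candidates), an empty return certifies that no $6$-representation of $15707$ exists. Combining the two halves gives that $15707$ is the largest integer that is not $6$-representable.

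Before running the large computation it is worth double-checking the hypotheses of Theorem~\ref{th:Tcomp} are genuinely met by \eqref{eq:transl6}: each of the four tuples must actually be a $6$-translation, i.e.\ the reciprocals of its $y_i$'s must sum to $1-\tfrac12=\tfrac12$, the entries must be strictly increasing with $y_1\ge 6$, and for each $y_i$ one needs either $y_i<2\cdot 6=12$ or $2\nmid y_i$; and the four residues $\sh(r)\bmod 4$ must cover $\{0,1,2,3\}$. Since $\sc(r)=4$ for all four, this last point reduces to checking that $\{13036,13657,13946,12747\}\bmod 4 = \{0,1,2,3\}$, which holds. These are the verifications the excerpt already asserts ("It can be easily verified\dots"), so they are routine, but they must be in place for the argument to go through.

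The main obstacle is entirely computational rather than conceptual: the search must succeed for \emph{every} $m$ in the interval $[15708,76774]$, a range of roughly $61{,}000$ integers, and $6$-representable numbers are noted to be sparse among small integers (the smallest being $2579,3633,3735,3868$). So there is a real risk that some $m$ in this window fails to be $6$-representable, which would force enlarging $n$ — and hence the interval to be checked — until a clean starting point is found; in practice one picks $n$ as the largest non-$6$-representable number (here $15707$) precisely so that the interval $[n+1,qn+s]$ consists entirely of $6$-representable numbers. Establishing that fact is the crux, and it rests on the efficiency of the pruning in Algorithm~\ref{alg:findX}: the bounds \eqref{eq:rangex1} keep each branching factor small enough that the exhaustive search over $[15708,76774]$, and the single negative search at $15707$, terminate in reasonable time. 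Assuming the computation completes as claimed, Theorem~\ref{th:Tcomp} packages everything, and no further argument is needed.
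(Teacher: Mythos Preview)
Your proposal is correct and follows essentially the same approach as the paper: compute that $15707$ is not $6$-representable, verify computationally that every $m\in[15708,76774]$ is $6$-representable, and then invoke Theorem~\ref{th:Tcomp} with the complete set~\eqref{eq:transl6} (with $q=4$, $s=13946$, $n=15707$). Your additional explicit verification of the $6$-translation conditions and the completeness of the residue system is a welcome expansion of what the paper leaves as ``easily verified.''
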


\begin{proof} First, we computationally establish that $15707$ is not $6$-representable.
Thanks to the complete set of $6$-translations \eqref{eq:transl6}, 
by Theorem~\ref{th:Tcomp}, it remains to show that all integers $m$ in the interval $15708\leq m\leq 4\cdot 15707+13946=76774$ are $6$-representable, which we again establish computationally (see Section~\ref{sec:supp}).
\end{proof}

We have also computed similar bounds for other $t\leq 8$ (sequence \texttt{A297896} in the OEIS~\cite{OEIS}), although we do not know whether such bounds exist for all $t$.

Theorem~\ref{th:rep6} together with Lemma~\ref{lem:no2139}(i) provides yet another proof of Theorem~\ref{th:main}. 

\section{Supplementary Files}\label{sec:supp}

We provide the following supplementary files to support our study:
\begin{itemize}
\item \url{https://oeis.org/A297895/a297895.txt} contains representations of all representable integers $m\leq 54533$, which are $\{21,39\}$-avoiding if $m\geq 8543$ and $m\notin E$ (see Lemma~\ref{lem:no2139});
\item \url{https://oeis.org/A303400/a303400.txt} contains $6$-representations of all $6$-representable integers up to $76774$, which include all integers in the interval $[15708,76774]$.
\end{itemize}

\bibliographystyle{plain}
\bibliography{eg.bib}

\begin{thebibliography}{1}

\bibitem{Graham1963}
R.~L. Graham.
\newblock A theorem on partitions.
\newblock {\em Journal of the Australian Mathematical Society}, 3(04):435--441,
  1963.

\bibitem{Guy2013UPINT}
Richard Guy.
\newblock {\em Unsolved problems in number theory}, volume~1.
\newblock Springer Science \& Business Media, 2013.

\bibitem{Knott2017}
Ron Knott.
\newblock Egyptian fractions.
\newblock Published electronically at
  \url{http://www.maths.surrey.ac.uk/hosted-sites/R.Knott/Fractions/egyptian.html},
  2017.

\bibitem{OEIS}
{The OEIS Foundation}.
\newblock {\em {The On-Line Encyclopedia of Integer Sequences}}.
\newblock Published electronically at \url{http://oeis.org}, 2018.

\end{thebibliography}

\end{document}